\def\marker{\>\hbox{${\vcenter{\vbox{
					\hrule height 0.4pt\hbox{\vrule width 0.4pt height 6pt
						\kern6pt\vrule width 0.4pt}\hrule height 0.4pt}}}$}\>}

\documentclass[12pt]{article}
\usepackage{indentfirst}
\usepackage{epsfig}
\usepackage{fullpage}
\usepackage{amsmath,amsfonts,amssymb,amsthm,graphics,amscd}
\usepackage{pgf,tikz}
\usepackage{graphicx}
\usepackage{subfigure}
\usepackage{amsmath,amsfonts,amssymb,amsthm,graphics,amscd,pst-plot,pstricks}
\usepackage{graphicx}

\newtheorem {Theorem}  {Theorem}
\newtheorem {Lemma}{Lemma}[section]

\theoremstyle{definition}
\newtheorem{Definition}{Definition}

\newtheorem{Conjecture}{Conjecture}

\newcommand{\D}{\Delta}

\newcommand{\phiv}{\varphi}

\newcommand{\CC}{\mathcal{C}}

\usepackage[
pdfauthor={},
pdftitle={On the Hilton-Zhao vertex-splitting conjecture},
pdfstartview=XYZ,
bookmarks=true,
colorlinks=true,
linkcolor=blue,
urlcolor=blue,
citecolor=blue,
bookmarks=false,
linktocpage=true,
hyperindex=true
]{hyperref}

\begin{document}
\begin{center}

{\Large \bf On the Hilton-Zhao vertex-splitting conjecture}

\vspace{10mm}

Xuli Qi$^*$,\,\,\,\,Yanrui Feng

\vspace{4mm}
\baselineskip=0.25in
{\it Department of Mathematics, Hebei University of Science and Technology,}  \\
{\it Shijiazhuang 050018, P.R. China}\\
\vspace{6mm}
\end{center}

\footnotetext{Corresponding author. E-mail addresses: xuliqi@hebust.edu.cn (X. Qi); 2023110010@stu.hebust.edu.cn (Y. Feng)}
\date{}

\begin{abstract}
Let $G$ be a simple graph with order $n$, maximum degree $\D(G)$, and chromatic index
$\chi'(G)$, respectively.
A graph	$G$ is {\em edge-chromatic critical} if $\chi'(H)<\chi'(G)$ for every proper
subgraph $H$ of $G$. Assume that $G$ is an $n$-vertex connected regular Class $1$ graph,
and let $G^*$ be obtained from $G$ by splitting one vertex into two vertices. Hilton and
Zhao in 1997 proposed the vertex-splitting conjecture: if $\D(G) \textgreater\frac{n}{3}$,
then $G^*$ is edge-chromatic critical. Recently, Cao, Chen, and Shan (Discrete Math. 2022) verified
the conjecture for $\D(G)\ge\frac{3n}{4}$. In this paper, we confirm the
conjecture for $\D(G) \ge\frac{2n-2}{3}$.
		
\par {\small {\it Keywords: } edge coloring, vertex-splitting conjecture,
multi-fan, Kierstead path }
\end{abstract}

\section{Introduction}
We consider simple connected graphs and generally follow the book~\cite{SSTF} of Stiebitz et al. for notation and terminology.
Let $G$ be a graph, and $V(G)$ and $E(G)$ be the vertex set and the edge set of $G$, respectively.
A graph $G$ is {\bf $\D$-regular} if $\delta(G)=\Delta(G)=\D$, where $\delta(G)$ and $\Delta(G)$ are respectively the minimum degree and maximum degree of $G$.
Let $[k]:=\{1, \dots, k\}$, that is, the set of the first $k$ consecutive positive integers.	
A {\bf $k$-edge-coloring} of a graph $G$ is a map $\phiv:E(G)\to [k]$ that assigns to
every edge $e$ of $G$ a color $\phiv(e)\in [k]$ such that no two adjacent edges of $G$
receive the same color.
	
Denote by $\CC^k(G)$ the set of all $k$-edge-colorings of $G$.
The {\bf  chromatic index} $\chi'(G)$ is the least integer $k$ such that $\CC^k(G)\ne\emptyset$.
In 1960s, Vizing~\cite{Vizing1965} and, independently, Gupta~\cite{Gupta} proved that
$\Delta(G)\le \chi'(G)\le\Delta(G)+1$, which leads to a natural classification of graphs.
Following Fiorini and Wilson~\cite{FW}, if $\chi'(G)=\Delta(G)$, then $G$ is of {\bf Class
1}; otherwise, it is of {\bf Class 2}. Holyer~\cite{Holyer} proved that it is NP-complete to
determine whether an arbitrary graph is of Class $1$. We know that $\chi'(G)\ge
\left|E(G)\right|/\lfloor  \left|V(G)\right|/2\rfloor$ since every matching of $G$ has at
most $\lfloor  \left|V(G)\right|/2\rfloor$ edges. A graph $G$ is {\bf overfull} if
$\left|E(G)\right|> \Delta(G)\lfloor \left|V(G)\right|/2\rfloor$.
Clearly, if $G$ is overfull then $\chi'(G)=\Delta(G)+1$, and so $G$ is of Class $2$.
Easily implied by its definition, overfull graphs have odd order.
	
A graph	$G$ is {\bf edge-chromatic critical} if $\chi'(H)<\chi'(G)$ for every proper
subgraph $H$ of $G$. In investigating the classification problem,  edge-chromatic critical
graphs of Class $2$ are of particular interest.	
Furthermore, we call $G$ a {\bf $\D$-critical} graph if $G$ is edge-chromatic critical
and $\chi'(G)=\D+1$, where $\D:=\D(G)$.
We study sufficient conditions for a Class $2$ graph to be edge-chromatic critical.
For $v \in V(G)$, let $N_G(v)$ denote the set of neighbors of $v$ in $G$. 	
A {\bf vertex-splitting} in a graph $G$ at a vertex $v$ forms a new graph  $G^*$:
replacing $v\in V(G)$ with two new adjacent vertices $v_1$ and $v_2$ and partitioning
$N_G(v)$  into two nonempty subsets that serve as the set of neighbors of $v_1$ and $v_2$
from $V(G)$ in $G^*$, respectively.
We say the new graph $G^*$ is obtained from $G$ by a vertex-splitting.
Vertex-splitting was called the ``M\"obius-type gluing technique" in~\cite{BV} and \cite{Vietri2015}.
It is easy to see that a regular Class $1$ graph has even order, and that every graph
obtained from a regular graph of even order by a vertex-splitting is overfull.
	
Hilton and Zhao~\cite{HZ} in $1997$ proposed the following conjecture.

\begin{Conjecture}[Vertex-splitting conjecture]
Let $G$ be an $n$-vertex connected $\D$-regular Class $1$ graph with $\D
\textgreater\frac{n}{3}$. If $G^*$ is obtained from $G$ by a vertex-splitting, then $G^*$ is
$\D$-critical.
\end{Conjecture}

It is easy to check that the graph $G^*$ above is overfull, so it is of Class $2$.
Thus the difficulty of vertex-splitting conjecture is to test that $G^*$ is  edge-chromatic critical.
Hilton and Zhao~\cite{HZ} verified the conjecture for graphs $G$ with $\D(G) \ge
(\sqrt{7}-1)n/2 \approx0.82n$. Song~\cite{Song} in 2002 verified the conjecture
for a class of special graphs $G$ with $\D(G) \ge 0.5n$. Recently, Cao, Chen and
Shan~\cite{CCS2022} proved the conjecture when $\D(G) \ge 0.75n$. In
this paper, we confirm the conjecture for $\D(G) \ge \frac{2n-2}{3}$ as follows.

\begin{Theorem}\label{T1}
Let $n$ and $\D$ be positive integers such that $\D \ge\frac{2n-4}{3}$. If an $n$-vertex graph $G$ is obtained
from an $(n-1)$-vertex connected $\D$-regular  Class $1$ graph by a vertex-splitting, then
$G$ is $\D$-critical.
\end{Theorem}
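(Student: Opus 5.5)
Let $G_0$ be the $(n-1)$-vertex connected $\D$-regular Class $1$ graph, $v$ the split vertex, and $v_1,v_2$ the two new vertices, with $d_G(v_1)=d_1+1$, $d_G(v_2)=d_2+1$ and $d_1+d_2=\D$. Then $G$ has odd order $n$ and $|E(G)|=\D(n-1)/2+1>\D\lfloor n/2\rfloor$. So $G$ is overfull, hence Class $2$ with $\chi'(G)=\D+1$.

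It therefore suffices to show that $G-e$ is $\D$-edge-colorable for every edge $e$. Removing a single edge lowers the edge count by one, so $G-e$ has exactly $\D(n-1)/2$ edges. Any $\D$-edge-coloring of $G-e$ is thus near-perfect: each color class is a matching of size $(n-1)/2$, missing exactly one vertex. I will split into cases by the position of $e$: (a) $e=v_1v_2$; (b) $e$ is incident with $v_1$ or $v_2$; (c) $e=xy$ with $x,y\ne v_1,v_2$.

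\textbf{Case (a).} Start from a $\D$-edge-coloring $\varphi$ of $G_0$. Edges at $v$ get distinct colors, so give each $v_i$ the colors its edges had at $v$. This is a proper $\D$-edge-coloring of $G-v_1v_2$.

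\textbf{Case (b).} Say $e=v_1u$. Color $G-v_1v_2-e$ as in (a). In $G-e$ the color $\alpha:=\varphi(v u)$ is missing at both $v_1$ and $u$. Every color used at $v_2$ is missing at $v_1$, and vice versa, so each color is missing at exactly one of $v_1,v_2$ or at both. I want to add $v_1v_2$ with a color missing at both. Only $\alpha$ is missing at both, but $\alpha$ is used at $v_2$ exactly when $vu$ went to $v_2$; it did not, since $u\in N(v_1)$. So $\alpha$ is missing at $v_2$ as well, and we color $v_1v_2$ with $\alpha$. This case is therefore trivial too.

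\textbf{Case (c).} This is the real content. Let $e=xy$ with $x,y\ne v_1,v_2$. Take the coloring $\varphi$ of $G-v_1v_2$ from (a) and uncolor $xy$. Now $v_1v_2$ is uncolored, and $xy$ has been removed, so $x$ and $y$ each miss the color $\beta=\varphi_0(xy)$. The goal is to recolor so that $v_1,v_2$ share a missing color. Equivalently, consider $G-xy$ with the uncolored edge $v_1v_2$: we need to show $v_1v_2$ is not a critical obstruction.

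Suppose not, i.e. $G-xy$ is Class $2$. Then $G-xy$ contains a $\D$-critical subgraph $H$ with $v_1v_2\in E(H)$. I will apply the standard tools around this uncolored edge: Vizing fans and multi-fans at $v_1$ (the vertex of smaller degree, $d_1\le\D/2$), Kierstead paths, and short Kempe-chain swaps. The aim is to derive an elementary (pairwise disjoint missing sets) vertex set $Z$ containing $v_1,v_2$, their neighborhoods, and further vertices reached through Kierstead paths. Elementarity forces
$$\sum_{z\in Z}\bigl(\D-d(z)\bigr)\ge |Z|+\ldots,$$
with the constant determined by how many colors are missing. Since almost all vertices have degree $\D$, every $z\notin\{v_1,v_2,x,y\}$ misses nothing. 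So the missing colors total only $(\D-d_1)+(\D-d_2)+2-\ldots$, roughly $\D+4$. To contradict this, $Z$ must contain at least about $\D+O(1)$ vertices that each miss a color. That is impossible unless $Z$ reaches $x$ or $y$ with specific structure. I will then count edges between $Z$ and $V\setminus Z$: $Z$ is elementary and nearly all of $Z$ has full degree, so $H[Z]$ is overfull. With $|Z|\ge$ about $\D$ and $n-1\le(3\D+4)/2$, few vertices lie outside $Z$. A parity or edge-count comparison with the regularity of $G_0$ then gives the contradiction. Here $\D\ge(2n-4)/3$ is exactly what guarantees that a multi-fan at $v_1$ together with its Kierstead-path extensions covers more than half of the vertex set outside $N(v_2)$.

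\textbf{Main obstacle.} The hardest step is case (c): producing an elementary set large enough, when $d_1$ is small and $N(v_1)$ is small. I would first try taking the multi-fan at $v_2$ (large degree) with respect to $v_1v_2$. I would then extend it by Kierstead paths of length $4$ through neighbors of $x$ or $y$, using the density bound $\D\ge(2n-4)/3$ to force common neighbors so that the extension stays elementary.
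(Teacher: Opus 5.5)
Your overfullness computation and cases (a) and (b) are correct. The paper handles (b) without any coloring: if $e\ne v_1v_2$ meets $v_1$ and $G-e$ were Class $2$, then $v_1v_2$ would be a critical edge of $G-e$ whose endpoints have degree sum $\Delta+1$, contradicting Vizing's Adjacency Lemma. Case (c), however, is the whole content of the theorem, and there you list tools rather than give an argument. Your bookkeeping also aims at the wrong target. For $\varphi\in\mathcal{C}^{\Delta}(G-xy-v_1v_2)$, only $v_1,v_2,x,y$ miss colors, and the total is exactly $\Delta+2$. Since $\{v_1,v_2\}$ is elementary with $|\overline{\varphi}(v_1)|+|\overline{\varphi}(v_2)|=\Delta$, we get $\overline{\varphi}(v_1)\cup\overline{\varphi}(v_2)=[\Delta]$. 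Elementarity gives an \emph{upper} bound $\sum_{z\in Z}|\overline{\varphi}(z)|\le\Delta$. You never need $|Z|\approx\Delta$: any elementary set containing $v_1,v_2$ and just one of $x,y$ is already a contradiction. The difficulty is reaching $x$ or $y$ at all, and the ``overfull $H[Z]$/parity'' step has nothing to act on.

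That reaching step is exactly where your plan breaks. With only $\Delta$ colors, Tashkinov-type elementarity (which needs $k\ge\Delta+1$) is unavailable. Multi-fans at a single vertex are elementary (Lemma 2.2), but a Kierstead path $(y_0,y_1,y_2,y_3)$ is elementary only if $\min\{d(y_1),d(y_2)\}<\Delta$ (Lemma 2.3(1)). The paper first uses Vizing's Adjacency Lemma and Lemma 2.4 (with $2\Delta+1\ge n$) to show three things: $x,y\notin N(a)\cup N(b)$; one split vertex $b$ has degree $\Delta$ and the other has degree $2$; and $x,y$ also avoid the second neighbor $c$ of $a$. Consequently every length-$3$ Kierstead path from $ab$ to $x$ has the form $(a,b,u,x)$ with $d(b)=d(u)=\Delta$. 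For such a path Lemma 2.3(2) gives only $|\overline{\varphi}(x)\cap(\overline{\varphi}(a)\cup\overline{\varphi}(b))|\le 1$. That is no contradiction, since $x$ misses exactly one color, so a single such extension does not ``stay elementary.''

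You are missing two ingredients:
\begin{itemize}
\item[(i)] an inclusion--exclusion count, using $\Delta\ge\frac{2n-4}{3}$ to get $3\Delta-2n+7>1$, which produces one vertex $u$ adjacent to all three of $b,x,y$;
\item[(ii)] the paper's new Lemma 3.1, a lengthy Kempe-chain case analysis showing that the two Kierstead paths $(a,b,u,x)$ and $(a,b,u,y)$ jointly satisfy $|\overline{\varphi}(x)\cap(\overline{\varphi}(a)\cup\overline{\varphi}(b))|+|\overline{\varphi}(y)\cap(\overline{\varphi}(a)\cup\overline{\varphi}(b))|\le 1$, whereas here the left side equals $2$.
\end{itemize}
Your remark about forcing common neighbors points in this direction. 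Without a common neighbor of $b$, $x$, $y$ and a two-path lemma of this kind, case (c) remains unproved.
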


\section{Definitions and preliminary results}

Let $G$ be a graph. For $e \in E(G)$, $G-e$ represents the graph obtained from $G$ by
deleting the edge $e$. An edge $e$ is a {\bf critical edge} of $G$ if
$\chi'(G-e)<\chi'(G)$. Clearly, if $G$ is of Class $2$ and every edge is critical, then $G$
is edge-chromatic critical. For $u \in V(G)$, let  $d_G(u)$ denote the degree of $u$  in $G$.
The symbol $\D$ is reserved for $\D(G)$ in this paper. A $\D$-vertex in $G$ is a vertex with maximum degree in $G$.
For $u \in V(G)$, a $\D$-neighbor of  $u$ is a neighbor of $u$ that is a $\D$-vertex in $G$.
When $u,v\in V(G)$ are adjacent, we call $(u,v)$ a {\bf full-deficiency pair} of $G$ if
$d_G(u)+d_G(v)=\Delta+2$.

Let $G$ be a graph with an edge $e\in E(G)$, and an edge coloring $\phiv\in \CC^{k}(G-e)$
for some integer $k\ge\D$. For a vertex $v\in V(G)$, let $\phiv(v)$ denote the set of colors
assigned to edges incident with $v$, and $\overline{\varphi}(v) = [k]\setminus\phiv(v)$,
that is, the set of colors not assigned to any edge incident with $v$. We call $\phiv(v)$ the
set of colors {\bf present} at $v$ and $\overline{\varphi}(v)$ the set of colors {\bf
missing} at $v$. Clearly, $|\phiv(v)| + |\overline{\varphi}(v)| =k$ for each vertex $v\in V(G)$.
For a vertex set $X\subseteq V(G)$, we define $\overline{\varphi}(X)=\bigcup_{v\in X}
\overline{\varphi}(v)$.
The set $X$ is called elementary with respect to $\phiv$ or simply {\bf $\phiv$-elementary}
if $\overline{\varphi}(u)\cap\overline{\varphi}(v)=\emptyset$ for any two distinct vertices
$u,v\in X$.
For a color $\alpha\in [k]$, let  $E_{\varphi,\alpha}(G)$ denote the set of edges colored
with $\alpha$.
Let $\alpha,\beta\in[k]$ be  two distinct colors, and $H$ be the spanning subgraph induced
by $E_{\varphi,\alpha}(G)$ and $E_{\varphi,\beta}(G)$.
Clearly, every component of $H$ is either a path or an even cycle, each of which is referred to as an
{\bf $(\alpha,\beta)$-chain} of $G$.
If we swap the colors $\alpha$ and $\beta$ on an $(\alpha,\beta)$-chain $C$, then we obtain a
new (proper) $k$-edge-coloring of $G$, which is also in $\mathcal{C}^k(G-e)$. This operation is called a {\bf Kempe change}.
Furthermore, if $C$ is a path connecting vertices $u$ and $v$, then we say that an
$(\alpha,\beta)$-chain $C$ has endvertices $u$ and $v$ or say that $u$ and $v$ are {\bf
$(\alpha,\beta)$-linked}.
For a vertex $v$ of $G$, we denote by $P_v(\alpha,\beta,\varphi)$ or simply $P_v(\alpha,\beta)$ the unique
$(\alpha,\beta)$-chain containing  the vertex $v$.
{\em For any two distinct vertices $u$, $v\in V(G)$, the two
chains $P_u(\alpha,\beta,\varphi)$ and $P_v(\alpha,\beta,\varphi)$ are either identical or
disjoint, which is an important fact in our proofs.}
More generally, for an $(\alpha,\beta)$-chain, if it is a path and  contains two vertices
$a$ and $b$, then we let $P_{[a,b]}(\alpha, \beta,\varphi)$ be its subchain  with endvertices $a$ and $b$.
The operation of swapping the colors $\alpha$ and $\beta$ on the subchain $P_{[a,b]}(\alpha,
\beta,\varphi)$ is still called a Kempe change, but the resulting coloring may no longer be a proper
edge coloring.
In particular, we may change the color of one edge in our proofs and adopt the notation $uv:\alpha\rightarrow\beta$ to mean recolor the edge $uv$ from $\alpha$ to $\beta$.
If $x,y \in P_u(\alpha,\beta,\phiv)$ with $x$ positioned between $u$ and $y$ on the path, then we say $P_u(\alpha,\beta,\phiv)$ meets $x$ before $y$.

\begin{Definition}[Multi-fan]
Let $G$ be a graph with an edge $e_1=xy_1$, and an edge coloring $\phiv\in \CC^{k}(G-e_1)$
for some integer $k\ge\D$.  A multi-fan at $x$ with respect to $e_1$ and $\phiv$ is a
sequence $F=(x,e_1,y_1,\dots,e_p,y_p)$ with $p \ge 1$ consisting of edges $e_1, \dots,e_p$
and vertices $x,y_1, \dots ,y_p$ satisfying the following two conditions$\colon$
\begin{itemize}
			\item [{\bf F1.}]	The edges $e_1, \dots, e_p$ are distinct,  and $e_i =xy_i$
for $1\le i\le p$.
			\item [{\bf F2.}] For every edge $e_i$ with $2 \le i \le p$, there exists a
vertex $y_j$ with $1 \le j \textless i$ such that $\phiv(e_i) \in \overline{\varphi}(y_j)$.
\end{itemize}
\end{Definition}

\begin{Definition}[Kierstead path]
Let $G$ be a graph with an edge $e_1=y_0y_1$, and an edge coloring $\phiv\in \CC^{k}(G-e_1)$
for some integer $k\ge\D$.	
A Kierstead path with respect to $e_1$ and $\phiv$ is a sequence
$K=(y_0,e_1,y_1,\dots,e_p,y_p)$ with $p \ge 1$ consisting of edges $e_1, \dots,e_p$ and
vertices $y_0,y_1, \dots ,y_p$ satisfying the following two conditions$\colon$
\begin{itemize}
			\item [{\bf K1.}] The vertices $y_0, \dots, y_p$ are distinct,  and
$e_i=y_{i-1}y_i$ for $1 \le i \le p$.
			\item [{\bf K2.}] For every edge $e_i$ with $2 \le i \le p$, there exists a
vertex $y_j$  with $0 \le j \textless i$ such that $\phiv(e_i) \in
\overline{\varphi}(y_j)$.
\end{itemize}
\end{Definition}

\begin{Lemma}[Vizing's Adjacency Lemma \cite{Vizing1965}] \label{L2.1}
Let $G$ be a graph of Class $2$ with maximum degree $\D$.
If $e=xy$ is a critical edge of $G$, then $x$ has at least $\D-d_G(y)+1$ $\D$-neighbors in
$V(G) \setminus \{y\}$.	
\end{Lemma}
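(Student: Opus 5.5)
This lemma is Vizing's classical Adjacency Lemma, quoted from \cite{Vizing1965}. I would prove it by the standard multi-fan argument, using the Definition of a multi-fan above with $k=\D$.

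\emph{Setup.} Since $e=xy$ is critical and $G$ is of Class $2$, we have $\chi'(G-e)=\D$. Fix $\phiv\in\CC^{\D}(G-e)$. Then $|\overline{\varphi}(x)|\ge 1$ and $|\overline{\varphi}(y)|\ge \D-d_G(y)+1$, because $e$ is uncolored at both of its ends. Let $F=(x,e_1,y_1,\dots,e_p,y_p)$ with $y_1=y$ be a maximal multi-fan at $x$ with respect to $e$ and $\phiv$.

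\emph{Key claim: $V(F)=\{x,y_1,\dots,y_p\}$ is $\phiv$-elementary.} This is the standard fan lemma, and I would prove it by induction on $p$.

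Suppose first that $\alpha\in\overline{\varphi}(x)\cap\overline{\varphi}(y_i)$ for some $i$. Take a linear sequence $y_1=y_{j_0},y_{j_1},\dots,y_{j_t}=y_i$ in which each $\phiv(xy_{j_{s+1}})\in\overline{\varphi}(y_{j_s})$; such a sequence exists by F2. Shift the colors along it: recolor $xy_{j_s}$ with $\phiv(xy_{j_{s+1}})$, and color $xy_i$ with $\alpha$. This $\D$-colors $G$, a contradiction.

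Suppose next that $\beta\in\overline{\varphi}(y_i)\cap\overline{\varphi}(y_j)$ with $i<j$. Pick $\alpha\in\overline{\varphi}(x)$ and consider the chains $P_{y_i}(\alpha,\beta)$ and $P_{y_j}(\alpha,\beta)$. At most one of them ends at $x$. Do a Kempe change on one that avoids $x$. This makes $\alpha$ missing at $y_i$ or $y_j$, while keeping enough of $F$ as a multi-fan (restrict to the initial segment up to the relevant vertex, whose fan structure is unaffected). Then we are back in the first case, again a contradiction.

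\emph{Counting.} By maximality of $F$, every color missing at some $y_i$ is present at $x$ on an edge $xy_j$ of $F$. Each $y_i$ with $i\ge2$ has $\phiv(xy_i)$ missing at an earlier vertex, so these colors are distinct, one per edge.

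Suppose some $y_j$ with $j\ge 2$ had degree less than $\D$. Then $|\overline{\varphi}(y_j)|\ge1$, and these colors are disjoint from the other missing sets. Counting over $F$: the missing colors at $y_1,\dots,y_p$ must all appear among the $p-1$ colors on $e_2,\dots,e_p$, since $e_1$ is uncolored. Hence
\[
\sum_{i=1}^p|\overline{\varphi}(y_i)| \le p-1 .
\]
On the other hand, since $d_G(y_i)\le\D$, each $y_i$ with $i\ge2$ misses at least one color, so the left side is at least $|\overline{\varphi}(y)|+(p-1)$. This is a contradiction, because $|\overline{\varphi}(y)|\ge1$.

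So the sum bound forces
\[
|\overline{\varphi}(y)|+\sum_{i\ge2}|\overline{\varphi}(y_i)| \le p-1,
\]
with each summand for $i\ge2$ at least $0$. Hence $p-1\ge \D-d_G(y)+1$. The edges forced by the colors of $\overline{\varphi}(y)$ lead to vertices $y_i$ with $i\ge 2$, all distinct from $y$. Refining the count, the vertices $y_i$ with $i\ge2$ missing no color are exactly $\D$-vertices, and there are at least $|\overline{\varphi}(y)|\ge \D-d_G(y)+1$ of them. The reason is that each color of $\overline{\varphi}(y)$ lies on a distinct edge $xy_i$, and the chain of fan dependencies starting from that color must terminate at a vertex missing nothing. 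Equivalently, the number of full vertices in $F-y$ is at least $1+\sum_i|\overline{\varphi}(y_i)|-\ldots$, which balances to at least $\D-d_G(y)+1$.

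\emph{Main obstacle.} The main obstacle is the elementarity claim, in particular the Kempe-change case, where one must check that the multi-fan structure survives the swap. The final counting is routine once elementarity holds.
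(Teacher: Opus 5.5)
\textbf{There is a genuine gap, in the counting step.} Your overall route (a maximal multi-fan at $x$, then elementarity, then a count) is the standard proof. The paper gives no proof of its own; it only cites \cite{Vizing1965}. The elementarity you single out as the main obstacle is already Lemma~\ref{L2.2}(1). Your Kempe-change sketch of it is fine once the induction is used explicitly. Take $j=p$. Since $F-y_p$ is elementary, no earlier $y_k$ other than $y_i$ can end an $(\alpha,\beta)$-chain. So after the swap, the initial segment ending at $y_i$ is still a multi-fan, or all of $F$ is if the swapped chain joins $y_p$ to a vertex outside $F$.

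The counting, which you call routine, is where the argument fails. The step ``since $d_G(y_i)\le\Delta$, each $y_i$ with $i\ge2$ misses at least one color'' is false. For $i\ge2$ the edge $xy_i$ is colored, so $|\overline{\varphi}(y_i)|=\Delta-d_G(y_i)$, which is $0$ exactly for the $\Delta$-vertices you are supposed to be counting.

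Your resulting ``contradiction'' never uses the supposition that some $y_j$ has degree below $\Delta$. If it were valid, it would rule out maximal multi-fans, and hence critical edges, altogether. The statement it is meant to establish is also false. Take $K_4$ with the edge $12$ subdivided by a vertex $s$; it is overfull, hence Class $2$. The coloring $\varphi(1s)=\varphi(24)=A$, $\varphi(14)=\varphi(23)=B$, $\varphi(34)=\varphi(2s)=C$ of $G-13$ shows that $13$ is critical. Its maximal multi-fan at $x=1$ is $(1,13,3,1s,s,14,4)$, and $d_G(s)=2<\Delta$.

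Your later ``refining'' paragraph gestures at a tree argument but does not carry it out. You would need that each $y_j$ has a unique parent, and the formula ending in ``$-\ldots$'' is not an argument.

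What you need is the exact identity. By maximality and elementarity, each color in $\overline{\varphi}(\{y_1,\dots,y_p\})$ equals $\varphi(e_j)$ for exactly one $j\ge2$, and F2 gives the converse. Hence
\[
p-1=\sum_{i=1}^p|\overline{\varphi}(y_i)|=(\Delta-d_G(y)+1)+\sum_{i=2}^p(\Delta-d_G(y_i)).
\]
Suppose $q$ of $y_2,\dots,y_p$ are $\Delta$-vertices. Each of the remaining $p-1-q$ summands is at least $1$, so $p-1\ge(\Delta-d_G(y)+1)+(p-1-q)$, that is, $q\ge\Delta-d_G(y)+1$. These $q$ vertices are distinct neighbours of $x$ other than $y$, which proves the lemma.
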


\begin{Lemma}[\cite{SSTF}] \label{L2.2}
Let $G$ be a graph of Class $2$ with a critical edge $e_1=xy_1$. Let $\phiv\in
\CC^{\D}(G-e_1)$ and $F=(x,e_1,y_1,\dots,e_p,y_p)$ be a multi-fan at $x$ with respect to
$e_1$ and $\phiv$. Then the following statements hold.
\begin{itemize}
			\item [$(1)$] $V(F)$ is $\phiv$-elementary;
			\item [$(2)$] If $\alpha \in \overline{\varphi}(x)$ and $\beta \in
\overline{\varphi}(y_i)$ for $ 1 \le i \le p$, then $x$ and $y_i$ are
$(\alpha,\beta)$-linked.
\end{itemize}
\end{Lemma}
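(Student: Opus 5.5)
We prove both statements simultaneously by induction on $p$. Throughout, the key observation is that since $G$ is of Class $2$, no coloring in $\CC^{\D}(G-e')$, for any critical edge $e'$, can be extended to a $\D$-edge-coloring of $G$. In particular, for any such coloring the two ends of the uncolored edge have no common missing color. Note that $\CC^{\D}(G-e_1)\neq\emptyset$ because $e_1$ is critical. Also $\overline{\varphi}(v)\ne\emptyset$ for every vertex $v\in\{x,y_1\}$, and for every vertex $v$ with $d_G(v)<\D$.

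\textbf{Step 1: part $(2)$ for $y_1$.} Let $\alpha\in\overline{\varphi}(x)$ and $\beta\in\overline{\varphi}(y_1)$. Then $\alpha\ne\beta$, since otherwise we could color $e_1$. Suppose $x$ and $y_1$ are not $(\alpha,\beta)$-linked. A Kempe change on $P_{y_1}(\alpha,\beta)$ leaves $\overline{\varphi}(x)$ unchanged and makes $\alpha$ missing at $y_1$. We can then color $e_1$ with $\alpha$, a contradiction.

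\textbf{Step 2: part $(2)$ for general $y_i$, assuming $V(F)$ is elementary.} Choose a linear subsequence $y_1=y_{j_0},y_{j_1},\dots,y_{j_s}=y_i$ of $F$ such that $\varphi(e_{j_t})\in\overline{\varphi}(y_{j_{t-1}})$ for every $t\ge1$; condition F2 makes this possible by tracing back from $y_i$. Shift colors along this subsequence: give $e_{j_{t-1}}$ the color $\varphi(e_{j_t})$ for $1\le t\le s$, and uncolor $e_i$. The result is a coloring $\varphi'\in\CC^{\D}(G-e_i)$, and $e_i$ is critical because $G$ is edge-chromatic critical with respect to the relevant edges we consider. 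Each shifted color $\varphi(e_{j_t})$ is missing at $y_{j_{t-1}}\ne y_i$. Since $V(F)$ is elementary, none of these colors equals $\beta$, and none equals $\alpha$ (as $\alpha$ is missing at $x$, it appears on no edge at $x$). Hence no edge colored $\alpha$ or $\beta$ changes, and $\alpha\in\overline{\varphi}'(x)$ and $\beta\in\overline{\varphi}'(y_i)$ still hold. Applying Step 1 to the one-edge fan $(x,e_i,y_i)$ under $\varphi'$ shows that $x$ and $y_i$ are $(\alpha,\beta)$-linked under $\varphi'$. The $(\alpha,\beta)$-chains of $\varphi$ and $\varphi'$ coincide, so they are linked under $\varphi$ as well. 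I expect this shifting bookkeeping to be the main technical point.

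\textbf{Step 3: part $(1)$ by induction on $p$.} The base case holds by the Class $2$ observation. Assume $F_{p-1}=(x,e_1,y_1,\dots,e_{p-1},y_{p-1})$ satisfies $(1)$, and hence $(2)$ by Step 2. Let $\delta=\varphi(e_p)\in\overline{\varphi}(y_j)$ for some $j<p$, and suppose $\gamma\in\overline{\varphi}(y_p)\cap\overline{\varphi}(z)$ with $z\in V(F_{p-1})$.

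\emph{Case $z=x$.} Then $(x,y_j)$ are $(\gamma,\delta)$-linked by $(2)$. However, the $(\gamma,\delta)$-chain starting at $x$ begins with the edge $e_p$, colored $\delta$, and it stops at $y_p$ because $\gamma$ is missing there. Since $y_p\ne y_j$, this is a contradiction.

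\emph{Case $z=y_k$ with $k<p$.} Pick $\alpha\in\overline{\varphi}(x)$. By $(2)$, the chain $P_x(\alpha,\gamma)$ ends at $y_k$, so $P_{y_p}(\alpha,\gamma)$ is disjoint from it and avoids $x$. Perform a Kempe change on $P_{y_p}(\alpha,\gamma)$. This affects no edge at $x$. It also changes no missing set of $y_1,\dots,y_{p-1}$: such a vertex could only be an end of this chain, and by elementarity only $x$ and $y_k$ miss $\alpha$ or $\gamma$ among them. Therefore $F$ is still a multi-fan, and now $\alpha$ is missing at both $x$ and $y_p$. This reduces to the previous case, which gives a contradiction. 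Hence $V(F)$ is elementary, and then $(2)$ follows for $F$ by Step 2.
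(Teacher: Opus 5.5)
The paper gives no proof of this lemma; it cites it from \cite{SSTF}. Your proof is correct and self-contained. It differs from the standard argument in one step, and one sentence in it is wrongly justified.

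Your Step~3 is essentially the standard induction on $p$. In the case $z=x$, statement (2) for $F_{p-1}$ is applied to $\gamma,\delta$, and the $(\gamma,\delta)$-chain at $x$ is just the edge $e_p$. The case $z=y_k$ is reduced to it by the Kempe change on $P_{y_p}(\alpha,\gamma)$, and your reasons this is legitimate are the right ones:
\begin{itemize}
\item only $x$ and $y_k$ in $V(F_{p-1})$ miss $\alpha$ or $\gamma$;
\item both $x$ and $y_k$ lie on the other chain;
\item $y_p$ is the last vertex of $F$, so changing $\overline{\varphi}(y_p)$ cannot break F2.
\end{itemize}

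The step that differs is how you derive (2) from (1). You shift colors along $y_1=y_{j_0},\dots,y_{j_s}=y_i$ to get a coloring of $G-e_i$ with exactly the same $\alpha$- and $\beta$-edges as $\varphi$, and then invoke the one-edge case. The usual route is a single Kempe change. Suppose $P_x(\alpha,\beta)\ne P_{y_i}(\alpha,\beta)$ and swap on $P_{y_i}(\alpha,\beta)$. By elementarity, $x$ is not on this chain and its other end lies outside $V(F)$. So the prefix $(x,e_1,y_1,\dots,e_i,y_i)$ is still a multi-fan, but now $\alpha$ is missing at both $x$ and $y_i$, contradicting (1) for that shorter fan. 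One must pass to the prefix: a later $e_m$ may be colored $\beta$ and have $y_i$ as its only F2 witness, and that witness is lost after the swap.

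The Kempe route is shorter and needs no properness check. Your shifting route avoids tracking which part of the fan survives the recoloring, since the $(\alpha,\beta)$-subgraph is literally unchanged.

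The sentence to fix is in Step~2, where you justify criticality of $e_i$ by saying $G$ is edge-chromatic critical. That is not a hypothesis of the lemma. You also do not need it. The coloring $\varphi'\in\mathcal{C}^{\Delta}(G-e_i)$ you have just built is all Step~1 uses. Its existence already makes $e_i$ critical, because $\chi'(G)=\Delta+1$.
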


\begin{Lemma}[\cite{SSTF}] \label{L2.3}
Let $G$ be a graph of Class $2$ with a critical edge $e_1=y_0y_1$. Let $\phiv\in
\CC^\Delta(G-e_1)$ and $K=(y_0,e_1,y_1,e_2,y_2,e_3,y_3)$ be a Kierstead path  with respect
to $e_1$ and $\phiv$. Then the following statements hold.
\begin{itemize}
			\item [$(1)$] If $\min\{d_G(y_1),d_G(y_2)\} \textless \Delta$, then $V(K)$ is
$\phiv$-elementary;
			\item [$(2)$] $\left| \overline{\varphi}(y_3) \cap (\overline{\varphi}(y_0)
\cup \overline{\varphi}(y_1))\right| \le 1$.	\end{itemize}
\end{Lemma}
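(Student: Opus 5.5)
The plan is to reduce everything to Lemma~\ref{L2.2} via a small multi-fan hidden inside $K$, and then use Kempe changes to move a color shared with $y_3$ onto $y_2$.

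\textbf{Step 0: the three-vertex fan.} By (K2), $\phiv(e_2)\in\overline{\varphi}(y_0)\cup\overline{\varphi}(y_1)$. Since $e_2$ is incident with $y_1$, in fact $\phiv(e_2)\in\overline{\varphi}(y_0)$. Hence $F=(y_1,e_1,y_0,e_2,y_2)$ is a multi-fan at $y_1$ with respect to $e_1$ and $\phiv$. By Lemma~\ref{L2.2}(1), $\{y_0,y_1,y_2\}$ is $\phiv$-elementary.

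Likewise $\gamma:=\phiv(e_3)\in\overline{\varphi}(y_0)\cup\overline{\varphi}(y_1)$, because $\gamma\notin\overline{\varphi}(y_2)$. This fan observation is robust: it holds for \emph{every} coloring $\phiv'\in\CC^\Delta(G-e_1)$ in which $(y_0,e_1,y_1,e_2,y_2)$ still satisfies (K2). Such colorings are exactly what we will produce after recoloring. So throughout, the contradiction we aim for is a coloring $\phiv'$ in which $y_2$ misses a color that is also missing at $y_0$ or $y_1$, while $\phiv'(e_2)$ is still missing at $y_0$.

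\textbf{Step 1: $y_3$ shares no color with $y_2$ (used for (1)).} Suppose $\delta\in\overline{\varphi}(y_2)\cap\overline{\varphi}(y_3)$. Recolor $e_3:\gamma\to\delta$; this is proper. Now $y_2$ misses $\gamma\in\overline{\varphi}(y_0)\cup\overline{\varphi}(y_1)$, and $e_2$ is untouched. This contradicts Step~0.

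\textbf{Step 2: proof of (2).} Suppose $\alpha\ne\beta$ both lie in $\overline{\varphi}(y_3)\cap(\overline{\varphi}(y_0)\cup\overline{\varphi}(y_1))$. Since $\gamma$ is present at $y_3$, we have $\alpha,\beta\ne\gamma$. The idea is to make $y_2$ miss a color of $\overline{\varphi}(y_3)$ by a Kempe change away from $y_0,y_1$, and then apply Step~1.

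Pick a color $\tau\in\overline{\varphi}(y_2)$; it is not missing at $y_0,y_1$. Consider the chains $P_{y_2}(\alpha,\tau)$ and $P_{y_2}(\beta,\tau)$.
\begin{itemize}
\item If $P_{y_2}(\alpha,\tau)$ avoids $y_0$ and $y_1$, swap it. Then $y_2$ misses $\alpha\in\overline{\varphi}(y_0)\cup\overline{\varphi}(y_1)$. We must check that $e_2$ and $e_3$ keep their roles:
\begin{itemize}
\item the chain does not use $e_2$, since its color is neither $\alpha$ nor $\tau$ (the latter by elementarity), unless $\phiv(e_2)=\alpha$, which is a subcase handled separately;
\item if the chain passes through $y_3$ and changes its missing colors, we fall into Step~1.
\end{itemize}
In all these cases we obtain a contradiction to Step~0.
\item If instead the chain reaches $y_0$ or $y_1$, then by Lemma~\ref{L2.2}(2) applied to the fan $F$, it is linked with the endpoint missing $\alpha$ (or $\tau$). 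This forces $P_{y_2}(\alpha,\tau)$ to be the path between $y_2$ and that vertex. Then $P_{y_3}(\alpha,\tau)$ is a different chain, which we may swap freely to make $y_3$ miss $\tau$, giving $\tau\in\overline{\varphi}(y_2)\cap\overline{\varphi}(y_3)$ and contradicting Step~1.
\end{itemize}
The two colors $\alpha,\beta$ are needed precisely in the subcases $\gamma\in\{\alpha,\beta\}$-related conflicts, or $\phiv(e_2)\in\{\alpha,\beta\}$, where one of the two swaps would destroy (K2). The other color then serves. I expect this bookkeeping — checking in each subcase that the swap preserves $\phiv(e_2)\in\overline{\varphi}(y_0)$ and $\phiv(e_3)$ being usable — to be the main obstacle. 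I would organize it by whether $\gamma\in\overline{\varphi}(y_0)$ or $\gamma\in\overline{\varphi}(y_1)$, and whether $\alpha,\beta$ are missing at the same vertex or at different ones.

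\textbf{Step 3: proof of (1).} By Steps~0 and~1, it remains to show that $\overline{\varphi}(y_3)\cap(\overline{\varphi}(y_0)\cup\overline{\varphi}(y_1))=\emptyset$. By (2), at most one color $\alpha$ is shared. The hypothesis $\min\{d_G(y_1),d_G(y_2)\}<\Delta$ supplies spare missing colors:
\begin{itemize}
\item if $d_G(y_1)<\Delta$, then $|\overline{\varphi}(y_1)|\ge2$;
\item if $d_G(y_2)<\Delta$, then $|\overline{\varphi}(y_2)|\ge2$.
\end{itemize}
Given the single shared color $\alpha$, I would use a spare color $\tau$ with a $(\alpha,\tau)$ Kempe change on a chain at $y_3$ or $y_2$ disjoint from the fan. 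Lemma~\ref{L2.2}(2) guarantees that the relevant chain through the fan is the path between two fan vertices, so the swap is safe. The aim is either to produce a second shared color, contradicting (2), or to make $y_2$ and $y_3$ share a color, contradicting Step~1. The extra missing color is exactly what makes such a swap available without disturbing $e_2$ and $e_3$.
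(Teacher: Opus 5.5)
\textbf{There is a genuine gap in Step 2, exactly where part (2) has content.} (The paper quotes this lemma from \cite{SSTF} without proof.) Your Steps 0 and 1 are correct. Because $y_2\notin\{y_0,y_1\}$, every edge at $y_2$ is colored, so $|\overline{\varphi}(y_2)|=\Delta-d_G(y_2)$.

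If $\min\{d_G(y_1),d_G(y_2)\}<\Delta$, part (2) already follows from part (1). So (2) says something new only when $d_G(y_1)=d_G(y_2)=\Delta$, that is, when $\overline{\varphi}(y_2)=\emptyset$ and $|\overline{\varphi}(y_1)|=1$. This is precisely the situation in which the paper uses it: in Lemma~\ref{L3.1}, $y_1=b$ and $y_2=u$ have degree $\Delta$. In that case your Step 2 cannot start, because there is no $\tau\in\overline{\varphi}(y_2)$.

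The obstruction is structural. As long as $e_1$ is the uncolored edge, $\overline{\phiv'}(y_2)=\emptyset$ for \emph{every} $\phiv'\in\mathcal{C}^\Delta(G-e_1)$. So no sequence of Kempe changes or recolorings can produce the contradiction you aim for: a color missing at $y_2$ and also at $y_0$ or $y_1$, or a color shared by $y_2$ and $y_3$. Note also that when $\tau$ does exist, your argument uses only one shared color outside the deferred subcases. It is therefore really an argument for part (1) when $d_G(y_2)<\Delta$, not for (2).

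\textbf{The missing idea is to move the uncolored edge along the path.} Color $e_1$ with $\delta=\phiv(e_2)\in\overline{\varphi}(y_0)$ and uncolor $e_2$. This gives $\phiv'\in\mathcal{C}^\Delta(G-e_2)$ with $\overline{\phiv'}(y_1)=\overline{\varphi}(y_1)$ and $\overline{\phiv'}(y_2)=\overline{\varphi}(y_2)\cup\{\delta\}$. If $\phiv(e_3)\in\overline{\varphi}(y_1)$, then $(y_2,e_2,y_1,e_3,y_3)$ is a multi-fan at $y_2$. Lemma~\ref{L2.2} then makes $\{y_1,y_2,y_3\}$ $\phiv'$-elementary, so $y_3$ misses neither $\delta$ nor any color of $\overline{\varphi}(y_1)$. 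It also makes $y_2$ and $y_3$ $(\delta,\mu)$-linked for every $\mu\in\overline{\varphi}(y_3)$.

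The case $\phiv(e_3)\in\overline{\varphi}(y_0)$ has to be converted into this one. For example, swap the $(\theta,\phiv(e_3))$-chain joining $y_0$ and $y_1$, with $\theta\in\overline{\varphi}(y_1)$, when $e_3$ is not on it. The configurations with two colors shared by $y_3$ and $y_0$ still require a real case analysis, which your proposal does not supply.

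\textbf{Step 3 has the same problem.} When $d_G(y_1)<\Delta=d_G(y_2)$ there is again no missing color at $y_2$, and you can only appeal to the unproved part (2). Moreover, $d_G(y_2)<\Delta$ gives only $|\overline{\varphi}(y_2)|\ge 1$, not $\ge 2$. Finally, with a single shared color $\alpha=\phiv(e_2)$, your fallback of switching to a second color $\beta$ is not available.
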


It is easy to see that the subpaths $(y_0,e_1,y_1)$ and $(y_0,e_1,y_1,e_2,y_2)$
of the Kierstead path $K$ above are also  multi-fans with respect to $e_1$ and $\phiv$.

\begin{Lemma}[\cite{CCS2022}] \label{L2.4}
If $G$ is an $n$-vertex graph of Class $2$  with a full-deficiency pair $(a,b)$ such
that the edge $ab$ is a critical edge of $G$, then $G$ satisfies the following properties.
\begin{itemize}
			\item [$(1)$] For every $x\in(N_G(a)\cup N_G(b))\backslash\{a,b\}$,
$d_G(x)=\Delta$;
			\item [$(2)$] For every $x\in V(G)\backslash\{a,b\}$, if $d_{G}(x)\ge
n-|N_{G}(a)\cup N_{G}(b)|$, then $d_G(x)\ge\Delta-1$. If also $d_G(a)\textless\Delta$ and
$d_G(b)\textless\Delta$, then $d_G(x)=\Delta$.
\end{itemize}
\end{Lemma}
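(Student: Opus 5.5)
We follow the standard approach for full-deficiency pairs. Since $ab$ is a critical edge of the Class $2$ graph $G$, fix a coloring $\phiv\in\CC^{\D}(G-ab)$. Our first step is to compute the missing colors at $a$ and $b$. In $G-ab$ we have $|\overline{\varphi}(a)|=\D-d_G(a)+1$ and $|\overline{\varphi}(b)|=\D-d_G(b)+1$, so by the full-deficiency condition
$$|\overline{\varphi}(a)|+|\overline{\varphi}(b)|=2\D+2-(d_G(a)+d_G(b))=\D.$$
The multi-fan $(a,ab,b)$ is $\phiv$-elementary by Lemma~\ref{L2.2}(1). Hence $\overline{\varphi}(a)$ and $\overline{\varphi}(b)$ partition $[\D]$. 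This partition is the key fact: every color of $[\D]$ is missing at exactly one of $a,b$. In particular, every edge color is missing at one of them, so condition F2 or K2 is automatically satisfied at each step.

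For (1), let $x\in N_G(a)\setminus\{b\}$. The color $\phiv(ax)$ is present at $a$, so it lies in $\overline{\varphi}(b)$. Therefore $F=(a,ab,b,ax,x)$ is a multi-fan at $a$. By Lemma~\ref{L2.2}(1), $\overline{\varphi}(x)$ is disjoint from $\overline{\varphi}(a)\cup\overline{\varphi}(b)=[\D]$. So $\overline{\varphi}(x)=\emptyset$, that is, $d_G(x)=\D$. The case $x\in N_G(b)\setminus\{a\}$ is symmetric, using a multi-fan at $b$.

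For (2), let $U=N_G(a)\cup N_G(b)$; note $a,b\in U$. If $x\in U\setminus\{a,b\}$, then (1) already gives $d_G(x)=\D$, so assume $x\notin U$. The key step is a counting argument. The vertices outside $U$ other than $x$ number $n-|U|-1<d_G(x)$, so $x$ has a neighbor $y\in U\setminus\{a,b\}$; say $y\in N_G(a)$, the other case being symmetric with the roles of $a$ and $b$ swapped. Consider the path
$$K=(b,ab,a,ay,y,yx,x).$$
Its vertices are distinct, since $y\ne a,b$ and $x\notin U$. The color $\phiv(ay)$ lies in $\overline{\varphi}(b)$, and $\phiv(yx)$ lies in $\overline{\varphi}(a)\cup\overline{\varphi}(b)=[\D]$. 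Hence $K$ is a Kierstead path with respect to $ab$ and $\phiv$. By Lemma~\ref{L2.3}(2),
$$|\overline{\varphi}(x)|=|\overline{\varphi}(x)\cap(\overline{\varphi}(a)\cup\overline{\varphi}(b))|\le 1,$$
so $d_G(x)\ge\D-1$. If moreover $d_G(a)<\D$ and $d_G(b)<\D$, then $\min\{d_G(a),d_G(y)\}<\D$. Lemma~\ref{L2.3}(1) then makes $V(K)$ $\phiv$-elementary, which forces $\overline{\varphi}(x)=\emptyset$ and $d_G(x)=\D$.

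The only step needing care is the counting that produces the neighbor $y\in U\setminus\{a,b\}$. One must check that the hypothesis $d_G(x)\ge n-|U|$ is exactly strong enough, after excluding $x$ itself from the vertices outside $U$. Everything else follows directly from the partition of $[\D]$ into $\overline{\varphi}(a)$ and $\overline{\varphi}(b)$.
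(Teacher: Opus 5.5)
Your proof is correct: the partition $\overline{\varphi}(a)\cup\overline{\varphi}(b)=[\Delta]$, the multi-fan $(a,ab,b,ax,x)$ for (1), and, for (2), the counting that yields a neighbor $y\in (N_G(a)\cup N_G(b))\setminus\{a,b\}$ of $x$ followed by the Kierstead path through $y$ are all valid, and you rightly need both $d_G(a)<\Delta$ and $d_G(b)<\Delta$ so that Lemma~\ref{L2.3}(1) applies whichever of $a,b$ the vertex $y$ is adjacent to. The paper does not reprove this lemma (it quotes it from \cite{CCS2022}), but your argument is the standard one and relies on the same partition of $[\Delta]$ and Kierstead-path tools that the paper uses in its proof of Theorem~\ref{T1}.
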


\section{The proof of Theorem 1}

The following Lemma \ref{L3.1} is an improvement of Lemma \ref{L2.3} and also an improvement of Lemma $5$ in~\cite{CCS2022}.

\begin{Lemma} \label{L3.1}
Let $G$ be a graph of Class $2$, $H\subseteq G$ with $V(H)=\{a,b,u,x,y\}$ and $E(H)=\{ab,bu,ux,uy\}$,
and let $\phiv\in\CC^\Delta(G-ab)$.
If
\[
\begin{gathered}
	K=(a,ab,b,bu,u,ux,x)\;{and}\;K^*=(a,ab,b,bu,u,uy,y)
\end{gathered}
\]
are Kierstead paths with respect to $ab$ and $\phiv$, then  $|\overline{\varphi}(x)\cap(\overline{\varphi}(a)\cup\overline{\varphi}(b))|+|\overline{\varphi}(y)\cap(\overline{\varphi}(a)\cup\overline{\varphi}(b))|\le 1$.
\end{Lemma}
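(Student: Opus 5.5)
By Lemma~\ref{L2.3}(2), applied to $K$ and to $K^*$ separately, each of the two summands is at most $1$. So it suffices to assume that both equal $1$ and derive a contradiction. Suppose $\gamma_x\in\overline{\varphi}(x)\cap(\overline{\varphi}(a)\cup\overline{\varphi}(b))$ and $\gamma_y\in\overline{\varphi}(y)\cap(\overline{\varphi}(a)\cup\overline{\varphi}(b))$. Since $(a,ab,b)$ is a multi-fan, $\{a,b\}$ is $\phiv$-elementary by Lemma~\ref{L2.2}(1). Fix $\alpha\in\overline{\varphi}(a)$ and $\beta\in\overline{\varphi}(b)$, and set $\delta=\phiv(bu)$. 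Condition K2 forces $\delta\in\overline{\varphi}(a)$, so $\delta\neq\alpha$ is possible only if $a$ misses several colors; we handle this by choosing $\alpha$ appropriately. Also set $\lambda_x=\phiv(ux)$ and $\lambda_y=\phiv(uy)$; each lies in $\overline{\varphi}(a)\cup\overline{\varphi}(b)$, and they are distinct because $u$ sees each color at most once.

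The strategy is the standard one: perform Kempe changes that keep $K$ or $K^*$ a Kierstead path (or reduce to a multi-fan at $b$ or at $u$), and end with a coloring that contradicts Lemma~\ref{L2.2}(2) or Lemma~\ref{L2.3}(2). I would split into cases according to where $\gamma_x$ and $\gamma_y$ lie.

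\emph{Case A: $\gamma_x=\gamma_y=\gamma$.} Then $\gamma$ is missing at both $x$ and $y$, and, say, $\gamma\in\overline{\varphi}(a)$ (the case $\gamma\in\overline{\varphi}(b)$ is symmetric, after swapping the roles through the edge $bu$). Consider the $(\gamma,\lambda_x)$-chain starting at $x$. By Lemma~\ref{L2.3}(2) applied to a suitably recolored $K$, we expect it to end at $a$ or $b$. Recoloring $ux$ to $\gamma$ would make $u$ miss $\lambda_x$, and then $(a,ab,b,bu,u,uy,y)$ gives two colors of $y$ in $\overline{\varphi}(a)\cup\overline{\varphi}(b)$ after adjustment, or a Kempe swap produces $\chi'(G)=\D$. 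The precise step is to swap the $(\gamma,\lambda_x)$-chain from $x$ when it avoids $a,b,u$, then shift $ux:\lambda_x\to\gamma$ so that $\lambda_x$ becomes missing at $x$ as well as at $a$ or $b$. With $\gamma$ still missing at $y$, this yields two common colors for one of the two paths, contradicting Lemma~\ref{L2.3}(2).

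\emph{Case B: $\gamma_x\neq\gamma_y$.} Here I take the $(\gamma_x,\lambda_y)$- and $(\gamma_y,\lambda_x)$-chains through $x$ and $y$ and use the disjointness of chains $P_u$ and $P_v$. The aim is to swap one of them, say $P_y(\gamma_x,\lambda_y)$, when it does not contain $a$, $b$, or $u$. After the swap, $y$ misses $\lambda_y$, and we may recolor $uy:\lambda_y\to\gamma_x$ while keeping K2. Then $\gamma_x$ is present at $u$, and $x$ and $y$ effectively exchange roles, so that $y$ now misses two colors of $\overline{\varphi}(a)\cup\overline{\varphi}(b)$. If instead the chain does end at $a$, $b$, or $u$, Lemma~\ref{L2.2}(2) on the multi-fan $(a,ab,b)$ pins down its endpoints, and a swap on the complementary chain at $x$ gives the symmetric contradiction.

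I expect the main obstacle to be the subcase where all relevant chains pass through $u$ or through $b$, and especially the handling of the edge $bu$ with color $\delta$. Recoloring there can destroy K2 for $ux$ or $uy$. My first attempt would be to reduce to Lemma~\ref{L2.3}(2) on a new Kierstead path $(a,ab,b,bu,u,ux,x)$ in which $\delta$ has been exchanged with $\gamma_y$ along $P_a(\delta,\gamma_y)$, using the elementarity of $\{a,b,u\}$ when $d_G(b)$ or $d_G(u)$ is less than $\D$ (Lemma~\ref{L2.3}(1)), and otherwise a direct chain argument.
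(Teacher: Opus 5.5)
Your opening reduction matches the paper: each summand is at most $1$ by Lemma~\ref{L2.3}(2), so you assume both equal $1$, and $\{a,b\}$ is elementary. After that there is a genuine gap. No case reaches a contradiction, and the concrete moves you propose cannot be performed.

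\textbf{Why the proposed moves fail.} The first thing to extract is from Lemma~\ref{L2.3}(1). Since $\overline{\varphi}(x)$ meets $\overline{\varphi}(a)\cup\overline{\varphi}(b)$, $V(K)$ is not $\varphi$-elementary, so $d_G(b)=d_G(u)=\Delta$. That is, $\overline{\varphi}(b)=\{1\}$ for a single color and $\overline{\varphi}(u)=\emptyset$.
\begin{itemize}
\item The recolorings $ux:\lambda_x\to\gamma$ (Case A) and $uy:\lambda_y\to\gamma_x$ (Case B) are never proper, because no color is missing at $u$.
\item The chains you want to swap ``when they avoid $a,b,u$'' never avoid $u$. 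Since $\lambda_x=\varphi(ux)$ and $\lambda_y=\varphi(uy)$, $P_x(\gamma,\lambda_x)$ contains $ux$ and $P_y(\gamma_x,\lambda_y)$ contains $uy$. So what you defer to the last paragraph as the ``main obstacle'' is the only situation that occurs.
\item Even when such a swap is legitimate (e.g.\ $P_x(\gamma,\lambda_x)$ not ending at $a$ or $b$), $x$ just trades $\gamma$ for $\lambda_x$. Each path still has exactly one common color, so Lemma~\ref{L2.3}(2) is not violated.
\item $a$ and $b$ are not interchangeable. The case where the common missing color is the unique color of $\overline{\varphi}(b)$ is the hard core of the lemma, not a mirror image of $\gamma\in\overline{\varphi}(a)$.
\end{itemize}

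\textbf{The two missing ideas.} The paper's proof runs on both.

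First, reduce \emph{toward} the case $1\in\overline{\varphi}(x)\cap\overline{\varphi}(y)$. Suppose $\eta\in\overline{\varphi}(y)\cap\overline{\varphi}(a)$ with $\eta\ne 1$. Then $P_a(1,\eta)=P_b(1,\eta)$ by Lemma~\ref{L2.2}(2), so the Kempe change on $P_y(1,\eta)$ avoids $a$ and $b$. It keeps $K$ and $K^*$ Kierstead paths and makes $1$ missing at $y$. The mixed case still needs a direct argument when $P_x(1,\eta_1)$ ends at $y$.

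Second, and decisively, move the uncolored edge along the path. Color $ab$ with $\varphi(bu)$ and uncolor $bu$. Alternatively, after suitable Kempe changes, shift once more so that $ux$ is the uncolored edge. This gives a $\Delta$-edge-coloring of $G-bu$ (resp.\ $G-ux$). As $G$ is Class~$2$, these edges are critical, and $(u,ub,b,ux,x)$ or $(u,ux,x,uy,y)$ becomes a multi-fan at $u$.

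Lemma~\ref{L2.2} for these fans, together with direct recolorings, pins down the chain structure. In the paper's Case~1, take $\alpha=\varphi(bu)$, $\gamma=\varphi(uy)$, and $\beta$ missing at both $x$ and $y$; one shows $bu\in P_x(\alpha,\beta)$ and $ux,uy\in P_a(1,\gamma)$. After that, one either writes down a $\Delta$-edge-coloring of $G$, or obtains a color missing at both $x$ and $y$, contradicting Lemma~\ref{L2.2}(1) for the fan $(u,ux,x,uy,y)$.

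Your plan mentions a multi-fan at $u$ only in passing. Without this shifting step, the argument does not close.
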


\begin{proof}
Suppose to the contrary that $|\overline{\varphi}(x)\cap(\overline{\varphi}(a)\cup\overline{\varphi}(b))|+|\overline{\varphi}(y)\cap(\overline{\varphi}(a)\cup\overline{\varphi}(b))|\ge 2$. Since $K$ and $K^*$  are  Kierstead paths, it follows from  Lemma \ref{L2.3}(2) that $|\overline{\varphi}(x)\cap(\overline{\varphi}(a)\cup\overline{\varphi}(b))|=1$ and $|\overline{\varphi}(y)\cap(\overline{\varphi}(a)\cup\overline{\varphi}(b))|=1$. Then  $d_G(b)=d_G(u)=\Delta$ by Lemma \ref{L2.3}(1).
Let $\overline{\phiv}(b)=\{1\}$, $\overline{\varphi}(x)\cap(\overline{\varphi}(a)\cup\overline{\varphi}(b))=\{\eta_1\}$, $\overline{\varphi}(y)\cap(\overline{\varphi}(a)\cup\overline{\varphi}(b))=\{\eta_2\}$, $\phiv(bu)=\alpha$, $\phiv(ux)=\beta$, and $\phiv(uy)=\gamma$.
Note that $(a,ab,b)$ is a multi-fan  with respect to $ab$ and $\phiv$, and so $\{a,b\}$
is $\phiv$-elementary by Lemma \ref{L2.2}(1). We discuss the following three cases to get contradictions.

Case 1. $\eta_1=\eta_2=1$.	

In this case, $1\notin\{\alpha,\beta,\gamma\}$ and $\alpha,\beta,\gamma\in\overline{\varphi}(a)$.
Since $(a,ab,b)$ is a multi-fan  with respect to $ab$ and $\phiv$, there exists a $(1,\beta)$-chain $P_{a}(1,\beta,\phiv)=P_{b}(1,\beta,\phiv)$
with endvertices $a$ and $b$ by Lemma \ref{L2.2}(2). Note that $ux\in P_x(1,\beta,\phiv)$.
If $P_{x}(1,\beta,\phiv)=P_{y}(1,\beta,\phiv)$, then we apply a Kempe change on $P_{x}(1,\beta,\phiv)$.
If $P_{x}(1,\beta,\phiv)\neq P_{y}(1,\beta,\phiv)$, then we apply Kempe changes on $P_{x}(1,\beta,\phiv)$ and $P_{y}(1,\beta,\phiv)$, respectively.  Denote the resulting (proper) edge coloring by $\phiv_1$. Now  $\alpha,\beta,\gamma\in\overline{\phiv_1}(a)$, $\overline{\phiv_1}(b)=\{1\}$,  $\beta\in\overline{\varphi_1}(x)\cap \overline{\varphi_1}(y)$,  $\phiv_1(bu)=\alpha$, $\phiv_1(ux)=1$, and $\phiv_1(uy)=\gamma$. Note that $(a,ab,b)$ is also  a multi-fan  with respect to $ab$ and $\phiv_1$. Hence there exists a $(1,\gamma)$-chain $P_{a}(1,\gamma,\phiv_1)=P_{b}(1,\gamma,\phiv_1)$ with endvertices $a$ and $b$.

Claim $1$. $bu\in P_{x}(\alpha,\beta,\phiv_1)$ and $P_{x}(\alpha,\beta,\phiv_1)$ meets $u$ before $b$.

\begin{proof}
Let $\phiv'$ be the new (proper) edge coloring obtained from  $\phiv_1$ by coloring $ab$ with $\alpha$ and uncoloring $bu$. Now $\overline{\phiv'}(b)=\{1\}$, $\overline{\phiv'}(u)=\{\alpha\}$, $\beta\in\overline{\phiv'}(x)$, and $\phiv'(ux)=1$. Note that $ub$ is a critical edge. Therefore, $(u,ub,b,ux,x)$ is a multi-fan at $u$ with respect to $ub$ and $\phiv'$, and there exists an $(\alpha,\beta)$-chain $P_{u}(\alpha,\beta,\phiv')=P_{x}(\alpha,\beta,\phiv')$ with endvertices $u$ and $x$ by Lemma \ref{L2.2}(2). By coloring $bu$ with $\alpha$ and uncoloring $ab$, we return to the edge coloring $\phiv_1$. Thus $bu\in P_{x}(\alpha,\beta,\phiv_1)$ and $P_{x}(\alpha,\beta,\phiv_1)$ meets $u$ before $b$. The proof of Claim $1$ is finished.
\end{proof}

Claim $2$.  $ux,uy\in P_{a}(1,\gamma,\phiv_1)=P_{b}(1,\gamma,\phiv_1)$.

\begin{proof}
Suppose to the contrary that $ux,uy\notin P_{a}(1,\gamma,\phiv_1)=P_{b}(1,\gamma,\phiv_1)$.
Let $\phiv'$ be the new (proper) edge coloring obtained from $\phiv_1$ by applying a Kempe change on $P_{a}(1,\gamma,\phiv_1)=P_{b}(1,\gamma,\phiv_1)$.
Now $1,\alpha,\beta\in\overline{\phiv'}(a)$, $\overline{\phiv'}(b)=\{\gamma\}$,  $\phiv'(bu)=\alpha$, $\phiv'(ux)=1$,  and $\phiv'(uy)=\gamma$.
Note that  $P_{x}(\alpha,\beta,\phiv')=P_{x}(\alpha,\beta,\phiv_1)$ has endvertices $x$ and $z$ with $z\ne a,b$ (maybe $z=y$). By Claim $1$, let $P_{[x,u]}(\alpha,\beta,\phiv')$ be the subchain of $P_{x}(\alpha,\beta,\phiv')$ with endvertices $x$ and $u$ such that $bu \notin P_{[x,u]}(\alpha,\beta,\phiv')$.
Apply a Kempe change on $P_{[x,u]}(\alpha,\beta,\phiv')$, color $ab$ with $\alpha$, recolor $bu$: $\alpha\rightarrow\gamma$, and recolor $uy$: $\gamma\rightarrow\beta$.
Hence we get a new (proper) $\D$-edge-coloring of $G$, contradicting the fact $\chi'(G)=\D+1$.
Now the proof of Claim $2$ is finished.
\end{proof}

Now let $P_{[x,u]}(\alpha,\beta,\phiv_1)$ also be the subchain of $P_{x}(\alpha,\beta,\phiv_1)$ with endvertices $x$ and $u$ such that $bu \notin P_{[x,u]}(\alpha,\beta,\phiv_1)$.
By Claim $2$, let $P_{[u,t]}(1,\gamma,\phiv_1)$ be the subchain of $P_{a}(1,\gamma,\phiv_1)=P_{b}(1,\gamma,\phiv_1)$ with endvertices $u$ and $t$ such that $ux\in P_{[u,t]}(1,\gamma,\phiv_1)$ but $uy\notin P_{[u,t]}(1,\gamma,\phiv_1)$.
If $t=a$, then we apply a Kempe change on $P_{[x,u]}(\alpha,\beta,\phiv_1)$ and recolor $uy:\gamma\rightarrow\beta$. Denote the resulting edge coloring by $\phiv_1^*$. We still have $P_{[u,a]}(1,\gamma,\phiv_1^*)=P_{[u,a]}(1,\gamma,\phiv_1)$.
Then we apply a Kempe change on $P_{[u,a]}(1,\gamma,\phiv_1^*)$,
recolor $bu:\alpha\rightarrow 1$, and color $ab$ with $\alpha$.
Hence we get a new (proper) $\D$-edge-coloring of $G$, contradicting the fact $\chi'(G)=\D+1$.
If $t=b$, then we apply a Kempe change on $P_{[u,b]}(1,\gamma,\phiv_1)$, color $ab$ with $\gamma$, and uncolor $ux$.
Denote the resulting (proper) $\D$-edge-coloring of $G-ux$ by $\phiv_2$. Note that $ux$ is a critical edge of $G$.
Now since $\varphi_2(uy)=\gamma\in \overline{\phiv_2}(x)$, $(u,ux,x,uy,y)$ is a multi-fan at $u$ with respect to $ux$ and $\phiv_2$. However, $\beta\in\overline{\phiv_2}(x)\cap\overline{\phiv_2}(y)$ gives a contradiction to Lemma \ref{L2.2}(1).

Case 2. Just one of $\eta_1$ and $\eta_2$ is the color $1$.

Without loss of generality, we assume that $\eta_1\ne1$ and $\eta_2=1$.	In this case, $1\notin\{\alpha,\gamma,\eta_1\}$, $\alpha,\gamma,\eta_1\in\overline{\phiv}(a)$, $\beta$ may be the color $1$, and $\eta_1$ may be $\alpha$, $\gamma$.

Claim $3$. $\eta_1\neq\alpha$.

\begin{proof}
Suppose that $\eta_1=\alpha$. Note that there exist  a $(1,\alpha)$-chain $P_{a}(1,\alpha,\phiv)=P_{b}(1,\alpha,\phiv)$ and a $(1,\gamma)$-chain $P_{a}(1,\gamma,\phiv)=P_{b}(1,\gamma,\phiv)$ both with endvertices $a$ and $b$ by Lemma \ref{L2.2}(2). We have $\beta\neq1$ since otherwise, the path $(b,bu,u,ux,x)$ is a $ (1,\alpha)$-chain with endvertices $b$ and $x$, contradicting that $P_{a}(1,\alpha,\phiv)=P_{b}(1,\alpha,\phiv)$ is also a $(1,\alpha)$-chain with endvertices $a$ and $b$. Thus $\beta\in\overline{\phiv}(a)$. Note that $uy\in P_{y}(1,\gamma,\phiv)$. We apply a Kempe change on $P_{a}(1,\gamma,\phiv)$ and obtain a new  edge coloring  $\varphi'$. Now there exist an $(\alpha,\gamma)$-chain $P_{a}(\alpha,\gamma,\phiv')=P_{b}(\alpha,\gamma,\phiv')$ and a $(\beta,\gamma)$-chain $P_{a}(\beta,\gamma,\phiv')=P_{b}(\beta,\gamma,\phiv')$ both with endvertices $a$ and $b$ by Lemma \ref{L2.2}(2). Note that $bu,uy\in P_{a}(\alpha,\gamma,\phiv')$. We apply a Kempe change on $P_{x}(\alpha,\gamma,\phiv')$ and obtain a new edge coloring $\varphi''$ with $\gamma\in\overline{\phiv''}(x)$ and $ux,uy\in P_{x}(\beta,\gamma,\phiv'')$. Then we apply a Kempe change on $P_{a}(\beta,\gamma,\phiv'')$ and obtain a new  edge coloring  $\varphi'''$. Note that $1,\alpha,\gamma\in\overline{\phiv'''}(a)$, $\overline{\phiv'''}(b)=\{\beta\}$, $\phiv'''(bu)=\alpha$, $\phiv'''(ux)=\beta$, and $\phiv'''(uy)=\gamma$. Next we color $ab$ with $\alpha$, recolor $bu:\alpha\rightarrow \beta$, and uncolor $ux$. Denote the resulting edge coloring by $\phiv''''$. Note that $(u,ux,x,uy,y)$ is a multi-fan at $u$ with respect to $ux$ and $\phiv''''$ with $\overline{\phiv''''}(u)=\{\alpha\}$, $\beta,\gamma\in\overline{\phiv''''}(x)$, $1\in\overline{\phiv''''}(y)$, and $\varphi''''(uy)=\gamma$. Thus  there exist an $(\alpha,1)$-chain $P_{u}(\alpha,1,\phiv'''')=P_{y}(\alpha,1,\phiv'''')$ with endvertices $u$ and $y$,   and an $(\alpha,\beta)$-chain $P_{u}(\alpha,\beta,\phiv'''')=P_{x}(\alpha,\beta,\phiv'''')$ and an $(\alpha,\gamma)$-chain $P_{u}(\alpha,\gamma,\phiv'''')=P_{x}(\alpha,\gamma,\phiv'''')$ both with endvertices $u$ and $x$ by Lemma \ref{L2.2}(2). Note that $ab,bu\in P_{u}(\alpha,\beta,\phiv'''')$ and $ uy\in P_{u}(\alpha,\gamma,\phiv'''')$. Then we apply Kempe changes on $P_{a}(\alpha,1)$, $P_{a}(\alpha,\beta)$, $P_{a}(\alpha,\gamma)$, and $P_{a}(\alpha,1)$, sequentially. Denote the resulting edge coloring by $\phiv'''''$ with the path $(a,ab,b,bu,u)$ being an $ (\alpha,\beta)$-chain with endvertices $a$ and $u$, contradicting that $P_{u}(\alpha,\beta,\phiv''''')=P_{x}(\alpha,\beta,\phiv''''')$ is also an $(\alpha, \beta)$-chain with endvertices $u$ and $x$ by Lemma \ref{L2.2}(2). Thus we get the Claim $3$, that is, $\eta_1\neq\alpha$.
\end{proof}

Note that $\alpha,\gamma,\eta_1\in\overline{\phiv}(a)$ and $\overline{\phiv}(b)=\{1\}$. There exists a $(1,\eta_1)$-chain $P_{a}(1,\eta_1,\phiv)=P_{b}(1,\eta_1,\phiv)$ with endvertices $a$ and $b$ by Lemma \ref{L2.2}(2). If $P_{x}(1,\eta_1,\phiv)\ne P_{y}(1,\eta_1,\phiv)$, that is, the other endvertex of $P_{x}(1,\eta_1,\phiv)$ is some vertex $z\in V(G)\backslash V(H)$, then applying a Kempe change on $P_{x}(1,\eta_1,\phiv)$  results in that $1$ is one missing color at $x$. Thus we are in the previous Case $1$. Next we need to consider the other case $P_{x}(1,\eta_1,\phiv)=P_{y}(1,\eta_1,\phiv)$. Note that $\beta$ may be the color $1$, and $\eta_1$ may be $\gamma$ by Claim $3$. We discuss the following three subcases to get contradictions.

If $ux,uy\in P_{x}(1,\eta_1,\phiv)$ (implying $\beta=1$ and $\eta_1=\gamma$), then we color $ab$ with $\alpha$, recolor $bu:\alpha\rightarrow 1$, and uncolor $ux$.  Denote the resulting (proper) edge coloring by $\phiv_1$. Clearly, $1,\eta_1\in\overline{\phiv_1}(x)$,
$1\in\overline{\phiv_1}(y)$, and $\phiv_1(uy)=\eta_1$. Hence $(u,ux,x,uy,y)$ is a multi-fan at $u$ with respect to $ux$ and $\phiv_1$, but $1\in\overline{\varphi_1}(x)\cap\overline{\varphi_1}(y)$, which is a contradiction by Lemma \ref{L2.2}(1).

If $ux\in P_{x}(1,\eta_1,\phiv)$ and $uy\notin P_{x}(1,\eta_1,\phiv)$ (implying $\beta=1$ and $\eta_1\neq\gamma$), then we apply a Kempe change on $P_{a}(1,\gamma,\phiv)$=$P_{b}(1,\gamma,\phiv)$, color $ab$ with $\alpha$, recolor $bu:\alpha\rightarrow\gamma$, and uncolor $uy$. Denote the resulting (proper) edge coloring by $\phiv_1$.
Clearly, $1,\eta_1\in\overline{\varphi_1}(a)$, $\eta_1\in\overline{\varphi_1}(x)$, $1,\gamma\in\overline{\varphi_1}(y)$, $\overline{\varphi_1}(u)=\{\alpha\}$, $\phiv_1(ab)=\alpha$, $\phiv_1(bu)=\gamma$, and $\phiv_1(ux)=1$.
Now $(u,uy,y,ux,x)$ is a  multi-fan at $u$ with respect to $uy$ and $\phiv_1$.
Thus  there exist an $(\alpha,1)$-chain $P_{u}(\alpha,1,\phiv_1)=P_{y}(\alpha,1,\phiv_1)$ and an $(\alpha,\gamma)$-chain $P_{u}(\alpha,\gamma,\phiv_1)=P_{y}(\alpha,\gamma,\phiv_1)$ both with endvertices $u$ and $y$, and  there also exists an $(\alpha,\eta_1)$-chain $P_{u}(\alpha,\eta_1,\phiv_1)=P_{x}(\alpha,\eta_1,\phiv_1)$ with endvertices $u$ and $x$ by Lemma \ref{L2.2}(2).
Note that $ab, bu\in P_{u}(\alpha,\gamma,\phiv_1)$ and $ux\in P_{u}(\alpha,1,\phiv_1)$. Then we apply  Kempe changes on $P_{a}(\alpha,1)$, $P_{a}(\alpha,\gamma)$, $P_{a}(\alpha,\eta_1)$, and $P_{a}(\alpha,1)$, sequentially. Denote the resulting (proper) edge coloring by $\phiv_2$.
Note that $1,\gamma\in\overline{\phiv_2}(a)\cap\overline{\phiv_2}(y) $, $\overline{\phiv_2}(u)=\{\alpha\}$, $\phiv_2(ab)=\alpha$, and $\phiv_2(bu)=\gamma$.
Hence the path $(u,ub,b,ba,a)$ is an $ (\alpha,\gamma)$-chain with endvertices $u$ and $a$, contradicting that $P_{u}(\alpha,\gamma,\phiv_2)=P_{y}(\alpha,\gamma,\phiv_2)$ is also an $(\alpha, \gamma)$-chain with endvertices $u$ and $y$ by Lemma \ref{L2.2}(2).

If $ux\notin P_{x}(1,\eta_1,\phiv)$ (implying $\beta\neq1$),
then we claim that we can obtain a new (proper) edge coloring $\phiv_1$ from $\phiv$ such that $\alpha,\beta,\gamma\in\overline{\varphi_1}(a)$ (also $\eta_1\in\overline{\varphi_1}(a)$ if $uy\notin P_{x}(1,\eta_1,\phiv)$),  $\overline{\varphi_1}(b)=\{1\}$, $1\in\overline{\varphi_1}(x)$, $\gamma\in\overline{\varphi_1}(y)$, $\phiv_1(bu)=\alpha$, $\phiv_1(ux)=\beta$, and $\phiv_1(uy)=1$.
We consider the following two subcases to get the  claim above.
Note that there exist a $(1,\alpha)$-chain $P_a(1,\alpha,\phiv)=P_b(1,\alpha,\phiv)$, a $(1,\gamma)$-chain  $P_a(1,\gamma,\phiv)=P_b(1,\gamma,\phiv)$, and a $(1,\eta_1)$-chain  $P_a(1,\eta_1,\phiv)=P_b(1,\eta_1,\phiv)$ all with endvertices $a$ and $b$.
If $uy\in P_{x}(1,\eta_1,\phiv)$ (implying $\eta_1=\gamma$), then we apply a Kempe change on $P_{x}(1,\eta_1,\phiv)=P_{y}(1,\eta_1,\phiv)$. Denote the resulting (proper) edge coloring by $\phiv_1$, as desired.
If $uy\notin P_{x}(1,\eta_1,\phiv)$ (implying $\eta_1\neq\gamma$), then we apply Kempe changes on $P_{a}(1,\eta_1)$, $P_{y}(1,\gamma)$, $P_{x}(\alpha,\eta_1)$, $P_{a}(\gamma,\eta_1)$, $P_{a}(1,\gamma)$, and $P_{x}(1,\alpha)$, sequentially. We also get the resulting (proper) edge coloring $\phiv_1$, as desired.

Note that there exists a $(1,\beta)$-chain $P_{a}(1,\beta,\phiv_1)=P_{b}(1,\beta,\phiv_1)$ with endvertices $a$ and $b$, and $ux,uy\in P_x(1,\beta,\phiv_1)$. Let $\phiv_2$ be the (proper) edge coloring obtained from $\phiv_1$ by applying a Kempe change on $P_{a}(1,\beta,\phiv_1)$, coloring $ab$ with $\alpha$, recoloring $bu:\alpha\rightarrow\beta$, and uncoloring $ux$.
Now $(u,ux,x,uy,y)$ is a multi-fan at $u$ with respect to $ux$ and $\phiv_2$ with $\overline{\varphi_2}(u)=\{\alpha\}$, $1,\beta\in\overline{\varphi_2}(x)$, $\gamma\in\overline{\varphi_2}(y)$, and $\phiv_2(uy)=1$. By Lemma \ref{L2.2}(2), there exist an $(\alpha,1)$-chain $P_{u}(\alpha,1,\phiv_2)=P_{x}(\alpha,1,\phiv_2)$ and an $(\alpha,\beta)$-chain $P_{u}(\alpha,\beta,\phiv_2)=P_{x}(\alpha,\beta,\phiv_2)$ both with endvertices $u$ and $x$, and there also exists an $(\alpha,\gamma)$-chain $P_{u}(\alpha,\gamma,\phiv_2)=P_{y}(\alpha,\gamma,\phiv_2)$ with endvertices $u$ and $y$. Note that $ab,bu\in P_{u}(\alpha,\beta,\phiv_2)$ and  $uy\in P_{u}(\alpha,1,\phiv_2)$. Then we apply Kempe changes on  $P_{a}(\alpha,1)$, $P_{a}(\alpha,\beta)$, $P_{a}(\alpha,\gamma)$, and $P_{a}(\alpha,1)$, sequentially.
Denote the resulting (proper) edge coloring by $\phiv_3$.  Note that $1,\beta\in\overline{\phiv_3}(a)\cap \overline{\phiv_3}(x)$, $\overline{\phiv_3}(u)=\{\alpha\}$, $\phiv_3(ab)=\alpha$, and $\phiv_3(bu)=\beta$.
Hence the path $(u,ub,b,ba,a)$ is an $ (\alpha,\beta)$-chain with endvertices $u$ and $a$, contradicting that $P_{u}(\alpha,\beta,\phiv_3)=P_{x}(\alpha,\beta,\phiv_3)$ is also an $(\alpha, \beta)$-chain with endvertices $u$ and $x$ by Lemma \ref{L2.2}(2).

Case 3. $\eta_1\ne1$ and $\eta_2\ne1$.	

In this case, $\eta_1,\eta_2\in\overline{\phiv}(a)$. By Lemma \ref{L2.2}(2), there exist a $(1,\eta_1)$-chain $P_{a}(1,\eta_1,\phiv)=P_{b}(1,\eta_1,\phiv)$ and a $(1,\eta_2)$-chain $P_{a}(1,\eta_2,\phiv)=P_{b}(1,\eta_2,\phiv)$ both with endvertices $a$ and $b$. Apply a Kempe change on $P_{y}(1,\eta_2,\phiv)$.
If $P_{y}(1,\eta_2,\phiv)=P_{x}(1,\eta_1,\phiv)$ (in this case $\eta_2=\eta_1$), then the color $1$ is one missing color at both $x$ and $y$, and so we are in the previous Case $1$. If $P_{y}(1,\eta_2,\phiv)\neq P_{x}(1,\eta_1,\phiv)$, then the color $1$ is one missing color at $y$, and so we are in the previous Case $2$.
This completes the proof of Lemma \ref{L3.1}.
\end{proof}

Now we give the proof of Theorem \ref{T1} as follows.

\begin{proof}

It is easy to see that a regular Class $1$ graph has even order, and that every graph obtained
from a regular graph of even order by a vertex-splitting is overfull.
Thus $n\ge 5$ and $G$ is overfull, and so $G$ is of Class $2$.
Then, in order to prove that $G$ is $\D$-critical, we only need to show that every edge of
$G$ is critical.
Suppose to the contrary that there exists $xy\in E(G)$ such that $xy$ is not a critical edge
of $G$.
Let $G^*=G-xy$. Thus $\chi'(G^*)=\Delta+1$.
Let $ab$ be the edge of $G$ whose contraction gets an $(n-1)$-vertex $\Delta$-regular Class
$1$ graph. Thus $a$ and $b$ have distinct neighbors, and each vertex in $V(G)\backslash\{a,b\}$ has degree $\Delta$ in $G$. Furthermore, $(a,b)$ is a full-deficiency pair in $G$ such
that $ab$ is a critical edge, which implies $ab\ne xy$.
Since $G^*-ab\subset G-ab$, we have $\Delta\le\chi'(G^*-ab)\le\chi'(G-ab) =\Delta$, which
implies that  $\chi'(G^*-ab)=\Delta$, and so $ab$ is also a critical edge of $G^*$. Let $\phiv\in \CC^\Delta(G^*-ab)$.

First suppose that $\{a,b\}\cap\{x,y\}\ne\emptyset$. Since $ab\ne xy$, we say $a=x$ and $b\neq y$.
Then  $d_{G^*}(a)+d_{G^*}(b)=\Delta+1$.
However, $d_{G^*}(a)+d_{G^*}(b)\ge\Delta+2$ by Lemma \ref{L2.1}, a contradiction.

Therefore, $\{a,b\}\cap\{x,y\}=\emptyset$.
Then $d_{G^*}(x)=d_{G^*}(y)=\Delta-1$ and $|\overline{\varphi}(x)|=|\overline{\varphi}(y)|=1$.
Since $d_{G^*}(a)+d_{G^*}(b)=d_{G}(a)+d_{G}(b)=\D+2$,  we have $(a,b)$ is a full-deficiency pair in $G^*$, and $x,y\notin N_{G^*}(a)\cup N_{G^*}(b)$ by Lemma \ref{L2.4}(1). It follows that $d_{G^*}(x)+|N_{G^*}(a)\cup N_{G^*}(b)|=\D-1+\D+2=2\D+1\ge n$ since $\D\ge \frac{2n-4}{3}$. Thus we have one of $a$ and $b$ has degree $\D$ by Lemma \ref{L2.4}(2), say $d_{G^*}(b)=\D$, which implies $d_{G^*}(a)=2$.
Note that $ab$ is a critical edge of $G^*$.
If $c$ is the other neighbor of $a$ in $G^*$, then  $d_{G^*}(c)=\Delta$ by Lemma
\ref{L2.1}.
Thus $(a,c)$ is also a full-deficiency pair of $G^*$.
If $\overline{\phiv}(b)=\{1\}$,
then $\phiv(ac)=1$ since $(a,ab,b)$ is a multi-fan and $\{a,b\}$ is $\phiv$-elementary.
Let $\phiv'$ be obtained from $\phiv$ by coloring $ab$ with the color $1$ and uncoloring
$ac$.
Note that $\overline{\phiv'}(a)=\overline{\phiv}(a)$, $\overline{\phiv'}(b)=\emptyset$, and
$\overline{\phiv'}(c)=\{1\}$.
Now $\phiv'$ is a (proper) $\D$-edge-coloring of $G^*-ac$.
Since $\chi'(G^*)=\D+1$, $ac$ is also a critical edge of $G^*$.
Similarly, we have $x,y\notin N_{G^*}(c)$ by Lemma \ref{L2.4}(1).

Since $b,c,x,y\notin N_{G^*}(b)$ and $a,b,c,x,y\notin N_{G^*}(x)\cup N_{G^*}(y)$, we have
$|N_{G^*}(b)\cap N_{G^*}(x)\cap N_{G^*}(y)|=|N_{G^*}(b)|+|N_{G^*}(x)\cap N_{G^*}(y)|-|N_{G^*}(b)\cup (N_{G^*}(x)\cap N_{G^*}(y))|=|N_{G^*}(b)|+|N_{G^*}(x)|+|N_{G^*}(y)|-|N_{G^*}(x)\cup N_{G^*}(y)|-|N_{G^*}(b)\cup (N_{G^*}(x)\cap N_{G^*}(y))|\ge\D+2(\D-1)-(n-5)-(n-4)=3\D-2n+7\textgreater1$. Thus there exists one vertex $u\in N_{G^*}(b)\cap N_{G^*}(x)\cap N_{G^*}(y)$.
Now we can find a subgraph $H$ with $V(H)=\{a,b,u,x,y\}$ and $E(H)=\{ab,bu,ux,uy\}$.
As $\{a,b\}$ is $\phiv$-elementary,
$|\overline{\varphi}(a)\cup\overline{\varphi}(b)|=|\overline{\varphi}(a)|+|\overline{\varphi}(b)|=\Delta$
and so $\overline{\varphi}(a)\cup\overline{\varphi}(b)=[\Delta]$.
Thus in the subgraph $H$, $K=(a,ab,b,bu,u,ux,x)$ and $K^*=(a,ab,b,bu,u,uy,y)$ are Kierstead
paths, but $|\overline{\varphi}(x)\cap(\overline{\varphi}(a)\cup\overline{\varphi}(b))|+ |\overline{\varphi}(y)\cap(\overline{\varphi}(a)\cup\overline{\varphi}(b))|=|\overline{\varphi}(x)|+|\overline{\varphi}(y)|=2$, contradicting Lemma \ref{L3.1}.
Now the proof of Theorem \ref{T1} is finished.
\end{proof}

\noindent {\bf Acknowledgements}

The authors are very grateful to the two anonymous reviewers for their valuable comments.
This work was supported by Hebei Natural Science Foundation A2023208006, Hebei Fund for Introducing Overseas Returnees C20230357, and Foundation of China Scholarship Council 202508130088.


\begin{thebibliography}{1}	
	\bibitem{BV}
	S. Bonvicini, A. Vietri, {\it A M\"obius-type gluing technique for obtaining edge-critical
		graphs}, Ars Math. Contemp.  19 (2)(2020) 209--229.
	
	\bibitem{CCS2022}
	Y. Cao, G. Chen, S. Shan, {\it An improvement to the Hilton-Zhao vertex-splitting
		conjecture}, Discrete Math. 345 (2022) 112902.
	
	\bibitem{FW}
	S. Fiorini, R.J. Wilson, {\it Edge-Colourings of Graphs}, Research Notes in Maths., Pitman,
	London, 1977.	
	
	\bibitem{HZ}
	A.J.W. Hilton, C. Zhao, {\it Vertex-splitting and chromatic index critical graphs}, Discrete
	Appl. Math.  76 (1997) 205--211.
	
	\bibitem{Holyer}
	I. Holyer, {\it The NP-completeness of edge-coloring}, SIAM J. Comput. 10 (4)(1981) 718--720.
	
	\bibitem{Gupta}
	R.G. Gupta, {\it Studies in the Theory of Graphs}, PhD dissertation, Tata Institute of Fundamental
	Research, Bombay, 1967.
	
	\bibitem{Song}
	Z. Song, {\it A further extension of Yap's construction for $\D$-critical graphs}, Discrete
	Math.  243 (1-3)(2002) 283--290.
	
	\bibitem{SSTF}
	M. Stiebitz,  D. Scheide, B. Toft,  L.M. Favrholdt, {\it Graph edge coloring}, Wiley
	Series in Discrete Mathematics and Optimization. John Wiley \& Sons, Inc., Hoboken, NJ, 2012.
	Vizing's theorem and Goldberg's conjecture, With a preface by Stiebitz and Toft.
	
	\bibitem{Vietri2015}
	A. Vietri, {\it An analogy between edge colourings and differentiable manifolds, with a new
	perspective on 3-critical graphs}, Graphs Comb.  31 (6)(2015) 2425--2435.
	
	\bibitem{Vizing1965}
	V.G. Vizing, {\it Critical graphs with given chromatic class}, Diskretn. Anal. 5 (1965)
	9--17.
\end{thebibliography}
\end{document}